\newcommand{\ie}{\emph{i.e.}}
\newcommand{\cf}{\emph{cf.}}
\newcommand{\Real}{\mathbb{R}}
\newcommand{\Nat}{\mathbb{N}}
\newcommand{\dist}{\mathop{\mathrm{dist}}\nolimits}
\newcommand{\sii}{L^2}
\newtheorem{Theorem}{Theorem}
\theoremstyle{definition}
\begin{document}
%
%-------%
% TITLE %
%-------%
%------------------------------------------%
%------------------------------------------%
\title{\textbf{\large Waveguides with asymptotically diverging twisting}}
\author{David Krej\v{c}i\v{r}\'ik}
\date{\small 
\emph{Department of Theoretical Physics,
Nuclear Physics Institute ASCR, \\
25068 \v{R}e\v{z}, Czech Republic; 
krejcirik@ujf.cas.cz}
\medskip \\
15 July 2014}
\maketitle
\begin{abstract}
\noindent
We provide a class of unbounded three-dimensional domains of infinite volume
for which the spectrum of the associated Dirichlet Laplacian is purely discrete.
The construction is based on considering tubes 
with asymptotically diverging twisting angle.
\end{abstract}
%
%------------------------------------------%
%------------------------------------------%
%
%---------------------%
\section{Introduction}
%---------------------%
%
Given an open set~$\Omega$ in~$\Real^d$, with $d \geq 1$,
we denote by $-\Delta_D^\Omega$ the Dirichlet Laplacian in $\sii(\Omega)$.
The significant features of~$\Omega$ as regards the spectrum of $-\Delta_D^\Omega$
are conveniently described by means of the classification introduced by
Glazman~\cite[Sec.~49]{Glazman} (see also \cite[Sec.~X.6.1]{Edmunds-Evans}):
$\Omega$~is \emph{quasi-conical} if it contains arbitrarily large balls;
$\Omega$~is \emph{quasi-cylindrical} if it is not quasi-conical
but contains an infinite set of identical disjoint balls;
$\Omega$~is \emph{quasi-bounded} if it is neither 
quasi-conical nor quasi-cylindrical.

By choosing a singular sequence of functions supported in the enlarging balls,
it is easy to see (\cf~\cite[Thm.~X.6.5]{Edmunds-Evans})
that the spectrum of $-\Delta_D^\Omega$ 
is purely essential whenever~$\Omega$ is quasi-conical,
namely, 
$
  \sigma(-\Delta_D^\Omega)
  = \sigma_\mathrm{ess}(-\Delta_D^\Omega)
  = [0,\infty)
$. 
The other extreme case is given by quasi-bounded~$\Omega$,
for which the spectrum is ``typically'' purely discrete,
\ie\ 
$
  \sigma_\mathrm{ess}(-\Delta_D^\Omega)
  = \varnothing
$,
at least under some regularity assumptions about~$\Omega$
(precise conditions can be stated in terms of capacity,
\cf~\cite[Thm.~VIII.3.1]{Edmunds-Evans}).
These regularity assumptions are automatically satisfied 
if~$\Omega$ is bounded or, more generally, of finite volume.  
In general,
it is well known that the discreteness of the spectrum of $-\Delta_D^\Omega$ 
is equivalent to the compactness of the resolvent, which in turn is
equivalent to the compactness of the embedding
\begin{equation}\label{embed}
  W_0^{1,2}(\Omega) \hookrightarrow \sii(\Omega) 
  \,.
\end{equation}
The spectral picture is most complicated for quasi-cylindrical domains,
where both essential and discrete spectra might be present.
It is not difficult to see that 
$
  \sigma_\mathrm{ess}(-\Delta_D^\Omega)
  \not= \varnothing
$
whenever~$\Omega$ is quasi-cylindrical,
while the existence of discrete eigenvalues is considered
as a non-trivial property here. 

A distinguished class of quasi-cylindrical domains is 
represented by unbounded \emph{tubes}, \ie\ tubular neighbourhoods 
of unbounded submanifolds in~$\Real^d$.
The motivation for this restriction is at least two-fold.
First, the tubular geometry is rich enough to demonstrate
the mathematical complexity of the class of quasi-cylindrical domains.
Second, the Dirichlet Laplacian in tubes is
a reasonable model for the Hamiltonian
in quantum-waveguide nanostructures,
where the unboundedness is required to have non-trivial transport properties.
The latter has lead to an enormous number of research papers in recent years,
in which the interplay between the geometry of a tube 
and the spectrum of the associated Dirichlet Laplacian 
is investigated in various settings;
we especially refer to the survey articles
\cite{DE,KKriz,K6-with-erratum,Wachsmuth-Teufel,Haag-Lampart-Teufel_2014}
with many references.  
As the most recent result, let us point out the repulsive effect
of \emph{twisting} in three-dimensional tubes about a spatial curve,
which can be mathematically formalised in terms of Hardy-type inequalities
\cite{EKK,K6-with-erratum,KZ1,BHK}.

The objective of the present note is to show that,
apart from expectable quasi-cylindrical settings, 
the tubular geometry gives rise to an interesting class 
of quasi-bounded domains with purely discrete spectrum, too.
To the best of our knowledge, this class of domains
has not been considered yet in connection with 
the compactness property of the Sobolev embedding~\eqref{embed}.
Our construction is based on considering tubes 
with a twisting angle diverging at the infinity of~$\Real^d$.
For simplicity, we restrict to $d=3$ in this note.

We would like to emphasise that the waveguides considered
in this note have a \emph{uniform cross-section}.
This makes the model very different from 
the recent study~\cite{Exner-Barseghyan_2013},
where the cross-section is allowed to vary along the reference line
and shrinks to a point at the infinity of the waveguide.
The latter leads to cusp-type domains for which compactness
of~\eqref{embed} follows easily by an equivalent
characterisation of quasi-boundedness
\begin{equation}\label{alt}
  \lim_{\stackrel{x\in\Omega}{|x|\to\infty}} \dist(x,\partial\Omega) = 0
  \,.
\end{equation}
In the present model the validity of~\eqref{alt} is not that evident.

%---------------------------%
\section{Twisted waveguides}
%---------------------------%
%
A (non-bent) twisted tube~$\Omega$
is obtained by translating and rotating a bounded open 
connected set $\omega \subset \Real^2$ about 
a straight line in~$\Real^3$. 
More precisely, following~\cite{K6-with-erratum}, 
$\Omega$ is defined as the image 
of a straight tube $\Omega_0:=\Real\times\omega$ via the mapping
\begin{equation}\label{diffeomorphism}
  \mathcal{L}(x)
  := \big(
  x_1,
  x_2\cos\theta(x_1)+x_3\sin\theta(x_1),
  -x_2\sin\theta(x_1)+x_3\cos\theta(x_1)
  \big)
  \,,
\end{equation}
\ie~$\Omega := \mathcal{L}(\Omega_0)$.
Here $\theta:\Real\to\Real$ is the rotation angle,
for which we merely assume 
\begin{equation}\label{mere}
  \theta \in W_\mathrm{loc}^{1,\infty}(\Real)
  \,.
\end{equation}
% 
%We assume no regularity of~$\omega$.

In this note, we are interested in spectral properties of
the Dirichlet Laplacian $-\Delta_D^\Omega$ in $\sii(\Omega)$
in the situation when the rotation angle explodes at infinity, \ie,
\begin{equation}\label{explode}
  \lim_{|x_1|\to\infty}|\dot\theta(x_1)|
  = +\infty
  \,.
\end{equation}
This situation is briefly discussed (without any proofs)
in \cite[Sec.~6.2]{KSed}.

Let us now give some intuition for the results 
established below.

\begin{enumerate}
\item
If the tube~$\Omega$ is untwisted 
(\ie~either $\theta$ is constant or $\omega$ is rotationally
invariant with respect to the origin in~$\Real^2$),
then~$\Omega$ can be identified with the straight tube~$\Omega_0$
and the spectrum is easily found by a separation
of variables,
\begin{equation}\label{straight}
  \sigma(-\Delta_D^{\Omega_0}) = [E_1,\infty)
  \,,
\end{equation}
where~$E_1$ denotes the lowest eigenvalue of $-\Delta_D^\omega$
in $\sii(\omega)$. 
\item
If~$\Omega$ is twisted but $\dot\theta$ vanishes at infinity,
then the spectrum~\eqref{straight} is not changed
\cite[Thm.~4.1 \& Sec.~6.1]{K6-with-erratum}.
However, there is a fine difference reflected in the fact
that $-\Delta_D^\Omega$ satisfies a Hardy-type inequality 
\cite{EKK,K6-with-erratum,KZ1}
\begin{equation}\label{Hardy}
  -\Delta_D^{\Omega_0} - E_1 \geq \rho
  \,,
\end{equation}
where~$\rho:\Omega \to [0,\infty)$ is a non-trivial function.
\item
This Hardy inequality turns into a Poincar\'e-type inequality
(\ie\ $\rho$~bound\-ed from below by a positive constant)
if~$|\dot\theta|$ is bounded from below by a positive constant
\cite[Sec.~6.1]{K6-with-erratum}. Consequently,
the spectrum of $-\Delta_D^\Omega$ starts strictly above~$E_1$ in this case.
\item
For instance, if~$\Omega$ is constantly twisted, 
\ie\ $\dot\theta(x_1) = \beta$, 
then the spectral problem
can be solved by a Floquet-type decomposition
and it is shown in \cite{EKov_2005} 
that
\begin{equation}\label{periodic}
  \sigma(-\Delta_D^{\Omega}) = [\lambda_1,\infty)
  \,,
\end{equation}
where $\lambda_1$ is the lowest eigenvalue of 
$-\Delta_D^\omega - \beta^2 \partial_\tau^2$ in $\sii(\omega)$,
with $\partial_\tau:=x_3\partial_2-x_2\partial_3$ being
the transverse angular derivative. 
Note that $\lambda_1>E_1$ if, and only if,
$\beta\not=0$ and $\omega$~is not rotationally
invariant with respect to the origin in~$\Real^2$.
\end{enumerate}

Our objective is to show that the repulsive effect
in the situation~\eqref{explode} is so strong that
the the threshold of the essential spectrum may 
even diverge to infinity, so that the spectrum of $-\Delta_D^{\Omega}$
becomes purely discrete.

%--------------------------------------%
\section{Quasi-cylindrical realisation}
%--------------------------------------%
%
First, however, let us show that the condition~\eqref{explode} 
by itself is not sufficient to guarantee the absence
of essential spectrum.
\begin{Theorem}\label{Thm.cylindre}
Let $0 \in \omega$.
Then 
$$
  \sigma(-\Delta_D^{\Omega}) \supset [\mu_1,\infty)
  \,,
$$
where~$\mu_1$ denotes the first eigenvalue of 
the Dirichlet Laplacian on the disc~$D_r$ of radius
$r:=\dist(0,\partial\omega)$. 
\end{Theorem}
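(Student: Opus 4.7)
My plan is to exhibit a reducing subspace of $-\Delta_D^\Omega$ on which the operator reduces to the plain Dirichlet Laplacian of the cylinder $\Real\times D_r$, whose spectrum is $[\mu_1,\infty)$.

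The first step is geometric: because $D_r$ is rotationally invariant about the origin and $\mathcal{L}$ acts on each transverse slice as a rotation, we have $\mathcal{L}(\Real\times D_r)=\Real\times D_r$, so $\Real\times D_r\subset\Omega$. Passing to the straight tube via the unitary $U:\sii(\Omega)\to\sii(\Omega_0)$, $(U\psi)(y):=\psi(\mathcal{L}(y))$, which is isometric because $\det\mathcal{L}'=1$, a standard change-of-variables computation shows that $\tilde H:=U(-\Delta_D^\Omega)U^{-1}$ has quadratic form
\[
\tilde q[u]=\int_{\Omega_0}\Bigl(|\partial_1 u-\dot\theta(x_1)\,\partial_\tau u|^2+|\partial_2 u|^2+|\partial_3 u|^2\Bigr)\,dx, \qquad u\in W_0^{1,2}(\Omega_0).
\]

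The decisive step is to introduce the closed subspace $K\subset\sii(\Omega_0)$ of transversally rotationally symmetric functions (those depending only on $x_1$ and $|x_\perp|$), with orthogonal projection $P$, and to show that $P$ commutes with $\tilde q$. For $Pu\in K$ one has $\partial_\tau(Pu)=0$, so the only non-obvious contribution to $\tilde q(Pu,v)$ is the cross term $-\int_{\Omega_0}\dot\theta\,\partial_1(Pu)\,\overline{\partial_\tau v}\,dx$. Since $\partial_1(Pu)$ depends only on $|x_\perp|$, for each $x_1$ the transverse integral reduces to an angular integral of $\partial_\tau v$ over each fixed-radius circle inside $\omega$; this angular integral vanishes, either because the full circle lies inside $\omega$ (whence $2\pi$-periodicity of $v$ makes it zero) or because the arc endpoints lie on $\partial\omega$ where $v$ vanishes. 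Consequently $\tilde q(Pu,v)$ reduces to the plain Dirichlet bilinear form $\int_{\Omega_0}\nabla(Pu)\cdot\overline{\nabla v}\,dx$, which is Hermitian-symmetric in $Pu$ and $v$, yielding form-commutation.

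Form-commutation yields the orthogonal decomposition $\tilde H=\tilde H_K\oplus\tilde H_{K^\perp}$, so $\sigma(-\Delta_D^\Omega)=\sigma(\tilde H)\supset\sigma(\tilde H_K)$. The same Dirichlet-trace argument applied to rotationally symmetric elements of $W_0^{1,2}(\Omega_0)$ forces them to be supported in $\Real\times\overline{D_r}$, so $\tilde H_K$ is the plain Dirichlet Laplacian on $\Real\times D_r$ restricted to rotationally symmetric functions. By separation of variables it coincides with $-\partial_1^2\otimes I+I\otimes(-\Delta_D^{D_r}|_{\mathrm{rs}})$, whose spectrum is $[\mu_1,\infty)$ because $\mu_1$ is the lowest eigenvalue of $-\Delta_D^{D_r}$ and is attained by a rotationally symmetric (Bessel $J_0$) eigenfunction. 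The main obstacle is the form-commutation step, which encodes the geometric principle that the twist is invisible to the inscribed rotationally symmetric cylinder; the rest is routine.
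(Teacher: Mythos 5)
Your geometric starting point is correct ($\mathcal{L}$ maps $\Real\times D_r$ onto itself, so the cylinder sits inside $\Omega$, and the twist term $\dot\theta\,\partial_\tau$ annihilates transversally radial functions), but the reducing-subspace step does not work. For $P$ to commute with the form $\tilde q$ you need, before anything else, that $P$ maps the form domain into itself, i.e.\ that the angular average of every function in $W_0^{1,2}(\Omega_0)$ again lies in $W_0^{1,2}(\Omega_0)$. This fails whenever $\omega$ is not a disc centred at the origin: take $u\in C_0^\infty(\Omega_0)$ positive somewhere at a radius $\rho\in\big(r,\sup_{t\in\omega}|t|\big)$; then $Pu$ equals a non-zero constant on each arc $\{|x_\perp|=\rho\}\cap\omega$, in particular up to the arc endpoints lying on $\partial\omega$, so $Pu$ has a non-vanishing Dirichlet trace and is not in $W_0^{1,2}(\Omega_0)$. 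Your own closing observation in fact contradicts the commutation claim: you correctly note that the rotationally symmetric elements of $W_0^{1,2}(\Omega_0)$ are supported in $\Real\times\overline{D_r}$, so if $P$ preserved the form domain then the angular average of \emph{every} Sobolev function on $\Omega_0$ would have to vanish outside $D_r$, which is absurd. Consequently there is no orthogonal decomposition $\tilde H=\tilde H_K\oplus\tilde H_{K^\perp}$, and the inclusion $\sigma(\tilde H)\supset\sigma(\tilde H_K)$ does not follow; restricting the form $\tilde q$ to the (non-invariant) subspace $K$ only produces min--max \emph{inequalities} for eigenvalues, not spectral inclusion. A related obstruction is that the zero-extension of a radial Dirichlet eigenfunction of $\Real\times D_r$ is not even in the operator domain of $-\Delta_D^{\Omega_0}$, since its normal derivative jumps across $|x_\perp|=r$.

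The repair is to drop the reduction and exploit your two correct observations the way the paper does: for each $\lambda\geq\mu_1$ one chooses a singular (Weyl) sequence for $-\Delta_D^{\Real\times D_r}$ consisting of functions in $C_0^\infty(\Real\times D_r)$ --- singular sequences may always be taken in a core, \cf~\cite[Thm.~X.6.5]{Edmunds-Evans} --- and extends them by zero to $\Omega$. They remain smooth test functions compactly supported in $\Omega$, on which $-\Delta_D^{\Omega}$ acts as the free Laplacian, so Weyl's criterion gives $\lambda\in\sigma_{\mathrm{ess}}(-\Delta_D^{\Omega})$. This uses exactly the facts you identified (the inclusion $\Real\times D_r\subset\Omega$ and $\sigma(-\Delta_D^{\Real\times D_r})=[\mu_1,\infty)$) without requiring any invariant subspace of the twisted operator.
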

\begin{proof}
By the definition of~$\Omega$, the cylinder $\Real \times D_r$ 
clearly lies inside~$\Omega$. The spectrum of the Dirichlet Laplacian
in the cylinder coincides with the interval $[\mu_1,\infty)$.
Applying the Weyl criterion for $-\Delta_D^{\Omega}$
with a singular sequence for the cylinder,
we end up with the claim.
\end{proof}

Since 
$
  \Real \times D_r
  \subset \Omega \subset
  \Real \times D_R
$ 
whenever $0 \in \omega$,
where $R:=\sup_{t\in\omega}|t|$,
it follows that~$\Omega$ is quasi-cylindrical in this case.
The geometrical setting of Theorem~\ref{Thm.cylindre}
is illustrated by Figure~\ref{Fig1}.

\begin{figure}[h!]
\begin{center}
%\fbox{\dots}
\includegraphics[width=0.7\textwidth]{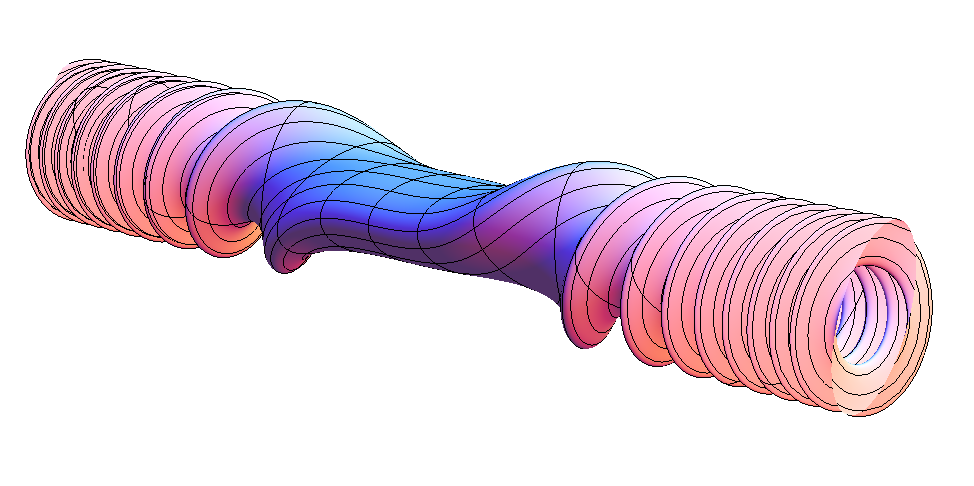}
\end{center}
\caption{Quasi-cylindrical realisation:
Waveguide of elliptical cross-section 
with a centre of rotation in the interior of the ellipse.
The embedded cylindrical channel is responsible 
for the existence of essential spectrum.}
\label{Fig1}
\end{figure}
%

%----------------------------------%
\section{Quasi-bounded realisation}
%----------------------------------%
%
Now we give a sufficient condition which guarantees
the absence of essential spectrum.
\begin{Theorem}\label{Thm.bounded}
Let $\omega \subset \{(t_1,t_2)\in\Real^2 \ | \ t_1 > 0\}$
and assume~\eqref{mere} and~\eqref{explode}.
Then 
$$
  \sigma(-\Delta_D^{\Omega}) = \sigma_\mathrm{disc}(-\Delta_D^{\Omega})
  \,.
$$
\end{Theorem}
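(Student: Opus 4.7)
The plan is to reduce the problem to the straightened tube and then use a Persson-type argument combined with a Hardy--Poincar\'e inequality whose effective potential grows with $\dot\theta^2$; the geometric hypothesis $\omega\subset\{t_1>0\}$ enters precisely to produce a transverse angular inequality with positive constant.

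First, following the standard procedure recalled in cases~2--4 of the introduction, the diffeomorphism $\mathcal{L}$ of~\eqref{diffeomorphism} yields a unitary equivalence between $-\Delta_D^\Omega$ on $\sii(\Omega)$ and the self-adjoint operator $H$ on $\sii(\Omega_0)$ associated with the closed quadratic form
\[
  Q[v] \;=\; \int_{\Omega_0} \bigl( |\partial_2 v|^2 + |\partial_3 v|^2 + |\partial_1 v - \dot\theta(x_1)\,\partial_\tau v|^2 \bigr)\,\mathrm{d}x,
  \quad \Dom(Q) = W_0^{1,2}(\Omega_0),
\]
where $\partial_\tau := x_3\partial_2 - x_2\partial_3$ and $\Omega_0 = \Real\times\omega$. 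Discreteness of $\sigma(-\Delta_D^\Omega)$ is equivalent to compactness of the resolvent of $H$, and by the Persson principle it suffices to show that $\Sigma(T):=\inf\{Q[v]/\|v\|^2 : v\in W_0^{1,2}(\Omega_0)\setminus\{0\},\ \supp v\subset\{|x_1|\geq T\}\}$ tends to $+\infty$ as $T\to\infty$.

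Next, I would exploit $\omega\subset\{t_1>0\}$ to get the transverse Poincar\'e inequality. Since $\partial_\tau$ generates rotations about the origin, writing the cross-section in polar coordinates $(r,\phi)$ one has $\partial_\tau=-\partial_\phi$ and the $\phi$-support of $\omega$ at every radius is contained in $(-\pi/2,\pi/2)$; the one-dimensional Dirichlet Poincar\'e inequality applied fibrewise gives $\int_\omega|\partial_\tau\phi|^2\geq c_\omega\int_\omega|\phi|^2$ for every $\phi\in W_0^{1,2}(\omega)$, with $c_\omega>0$. More importantly for the Hardy step, the Riccati equation $\partial_\tau\gamma-\gamma^2=c$ on $\omega$ (i.e.\ $-\partial_\phi\gamma-\gamma^2=c$) admits the explicit solution $\gamma(\phi)=-\sqrt{c}\tan(\sqrt{c}\,\phi)$ which is bounded on $\omega$ for every $c<1$, producing a bounded $\gamma\in L^\infty(\omega)$.

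The main estimate is then obtained by expanding $|(\partial_1-\dot\theta\partial_\tau)v+\dot\theta\gamma\,v|^2\geq 0$, integrating over $\Omega_0$, and using the Riccati relation together with integration by parts \emph{only in the transverse variables} (where the Dirichlet condition on $\partial\omega$ makes $\partial_\tau$ skew-adjoint). This produces
\[
  Q[v] \;\geq\; \int_{\Omega_0} |\nabla_\perp v|^2\,\mathrm{d}x + c\int_{\Omega_0}\dot\theta^2|v|^2\,\mathrm{d}x - \int_{\Omega_0} \dot\theta\,\gamma\,\partial_1|v|^2\,\mathrm{d}x,
\]
in which, crucially, no derivative of $\dot\theta$ has been taken. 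The remainder is then controlled by Cauchy--Schwarz: $|\int \dot\theta\gamma\,\partial_1|v|^2|\leq \varepsilon\|\partial_1 v\|^2+\varepsilon^{-1}\|\gamma\|_\infty^2 \|\dot\theta v\|^2$, the second term being absorbed into $c\int\dot\theta^2|v|^2$ for $\varepsilon$ large, while $\varepsilon\|\partial_1 v\|^2$ is handled by an IMS-type partition of $\Omega_0$ in the $x_1$-direction: on each unit cell $\dot\theta$ is essentially constant compared with $\int\dot\theta^2$, reducing the analysis locally to the constantly-twisted situation whose threshold $\lambda_1(\dot\theta)$ grows like $\dot\theta^2$ by \eqref{periodic}. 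Applied to $v$ with $\supp v\subset\{|x_1|\geq T\}$, where $\dot\theta(x_1)^2\geq M(T)^2\to\infty$, the result is $Q[v]\geq (c/2)M(T)^2\|v\|^2$ once $T$ is large enough, whence $\Sigma(T)\to\infty$ and $\sigma_{\mathrm{ess}}(H)=\varnothing$ by Persson.

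The main obstacle is precisely the bookkeeping in the last step: $Q$ does not separately control $\|\partial_1 v\|^2$ and $\|\dot\theta\,\partial_\tau v\|^2$, but only the combination $\|\partial_1 v-\dot\theta\partial_\tau v\|^2$, and a na\"\i ve integration by parts in $x_1$ of the remainder term would produce $-\int\ddot\theta\,\gamma\,|v|^2$, which is not available under the mere regularity $\theta\in W^{1,\infty}_{\mathrm{loc}}$. The IMS localisation is designed exactly to bypass this: on each cell one stays within the $W^{1,\infty}$ regularity of $\theta$ and compares to a constant-twist reference operator, so that the derivation never needs the distributional $\ddot\theta$.
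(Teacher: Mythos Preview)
Your straightening strategy is natural, but there is a genuine gap precisely at the point you flag as the ``main obstacle'', and the IMS patch does not close it as written.

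Concretely: after the Riccati step you arrive at
\[
  Q[v] \;\geq\; c\!\int_{\Omega_0}\dot\theta^2|v|^2\,\mathrm{d}x \;-\; \int_{\Omega_0}\dot\theta\,\gamma\,\partial_1|v|^2\,\mathrm{d}x
\]
and control the remainder by $\varepsilon\|\partial_1 v\|^2+\varepsilon^{-1}\|\gamma\|_\infty^2\|\dot\theta v\|^2$. Absorbing the second piece forces $\varepsilon$ large, so you must bound $\varepsilon\|\partial_1 v\|^2$ by $Q[v]$. But $Q$ controls only the combination $\|(\partial_1-\dot\theta\partial_\tau)v\|^2$, and under~\eqref{explode} the discrepancy $\|\dot\theta\,\partial_\tau v\|^2$ is not bounded by $Q[v]$ either. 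Your IMS fix appeals to ``$\dot\theta$ essentially constant on each unit cell'', yet under the sole hypothesis $\theta\in W^{1,\infty}_{\mathrm{loc}}$ the derivative $\dot\theta$ may oscillate arbitrarily within any interval; there is no modulus of continuity available, so the comparison to a constant-twist model on cells is unjustified. Moreover, \eqref{periodic} concerns the infinite constantly-twisted tube, not a finite cell with Neumann ends, so invoking $\lambda_1(\dot\theta)$ there needs its own argument. (A minor further point: with $\dot\theta$ unbounded the form domain of $Q$ is not $W_0^{1,2}(\Omega_0)$---some such $v$ give $Q[v]=\infty$---so the unitary-equivalence statement must be phrased via the pull-back of $W_0^{1,2}(\Omega)$.)

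The paper bypasses all of this by staying in the physical tube~$\Omega$, where the Dirichlet form is simply $\|\nabla u\|_{\sii(\Omega)}^2$ and therefore \emph{does} dominate $\|\partial_1 u\|_{\sii(\Omega)}^2$. After Neumann bracketing at $\{\pm s_n\}\times\omega$ (with $|\dot\theta|>n$ outside), the hypothesis $\omega\subset\{t_1>0\}$ is used geometrically: any ray parallel to the axis of rotation must hit $\partial\Omega$ within longitudinal distance $\pi/n$, because in that distance the cross-section turns by~$\pi$ and thus passes to the opposite half-space. A one-dimensional Poincar\'e inequality along the axis then gives $\|\partial_1 u\|^2\geq (n/2)^2\|u\|^2$ on the exterior piece, whence $\inf\sigma_{\mathrm{ess}}(-\Delta_D^\Omega)\geq n^2/4$ for every~$n$. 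In the straightened picture this is exactly the Poincar\'e inequality along the characteristics of $\partial_1+\dot\theta\partial_\phi$; it requires no Riccati function, no IMS, and no regularity of~$\dot\theta$ beyond~\eqref{mere}.
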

\begin{proof}
It follows from~\eqref{explode} that for any $n \in \Nat$
there exists $s_n \in (0,\infty)$ such that 
$|\dot\theta(x_1)| > n$ for every $|x_1| > s_n$.
Fix $n \in \Nat$ and define
$I_\mathrm{int} := (-s_n,s_n)$,
$I_\mathrm{ext} := (-\infty,-s_n) \cup (s_n,+\infty)$.
We also define 
$\Omega_\mathrm{int}:=\mathcal{L}(I_\mathrm{int}\times\omega)$ and 
$\Omega_\mathrm{ext}:=\mathcal{L}(I_\mathrm{ext}\times\omega)$.
Imposing a supplementary Neumann condition at the cuts 
$\{\pm s_n\} \times \omega$, we obtain a lower bound 
$$
  -\Delta_D^{\Omega} \geq 
  -\Delta_{DN}^{\Omega_\mathrm{int}} 
  \oplus -\Delta_{DN}^{\Omega_\mathrm{ext}}
  \,.
$$
Here $-\Delta_{DN}^{\Omega_\mathrm{int}}$ denotes the operator
in $\sii(\Omega_\mathrm{int})$ associated with the quadratic form 
$\psi \mapsto \|\nabla\psi\|_{\sii(\Omega_\mathrm{int})}^2$
with the domain being the restriction of $H_0^1(\Omega)$
to $\Omega_\mathrm{int}$, 
and similarly for $-\Delta_{DN}^{\Omega_\mathrm{ext}}$.
The spectrum of $-\Delta_{DN}^{\Omega_\mathrm{int}}$
is purely discrete. Hence, by the minimax principle,
$$
  \inf\sigma_\mathrm{ess}(-\Delta_D^{\Omega}) 
  \geq \inf\sigma_\mathrm{ess}(-\Delta_{DN}^{\Omega_\mathrm{ext}})
  \geq \inf\sigma(-\Delta_{DN}^{\Omega_\mathrm{ext}})
  \,.
$$

On $\Omega_\mathrm{ext}$ the magnitude of the velocity 
of rotation of the cross-section~$\omega$ about $\Real\times\{0\}$
is greater than~$n$.
Hence, the longitudinal distance on which the cross-section
turns by 180 degrees is estimated from above by~$\pi/n$.
That is, by our hypothesis, the rays on~$\Omega_\mathrm{ext}$
parallel to the axis of rotation 
and emanating from the cuts $\{\pm s_n\} \times \omega$
will hit a Dirichlet boundary of~$\Omega_\mathrm{ext}$ 
at most at the distance~$\pi/n$.
The same argument shows that the rays on~$\Omega_\mathrm{ext}$
parallel to the axis of rotation 
and emanating from any cut $\{x_1\} \times \omega$,
with $|x_1| > s_n+\pi/n$,
will hit a Dirichlet boundary of~$\Omega_\mathrm{ext}$ 
in both directions at most at the distance~$\pi/n$.
Consequently, for any~$\psi$ from the form domain of
$-\Delta_{DN}^{\Omega_\mathrm{ext}}$, we have
$$
  \|\nabla\psi\|_{\sii(\Omega_\mathrm{ext})}^2
  \geq \|\partial_1\psi\|_{\sii(\Omega_\mathrm{ext})}^2
  \geq \left(\frac{\pi}{2\,\pi/n}\right)^2 
  \|\psi\|_{\sii(\Omega_\mathrm{ext})}^2
  \,.
$$ 
By the minimax principle, it thus follows
$$
  \inf\sigma(-\Delta_{DN}^{\Omega_\mathrm{ext}})
  \geq \frac{n^2}{4}
  \,.
$$
Since~$n$ can be chosen arbitrarily large,
we conclude with 
$\sigma_\mathrm{ess}(-\Delta_D^{\Omega}) = \varnothing$.
\end{proof}

It follows from Theorem~\ref{Thm.bounded} that
the embedding~\eqref{embed} is compact.
Since the quasi-boundedness is a necessary condition 
for the compactness of~\eqref{embed},
we see that~$\Omega$ is quasi-bounded under the hypothesis 
of Theorem~\ref{Thm.bounded}.
The geometrical setting of Theorem~\ref{Thm.bounded}
is illustrated by Figure~\ref{Fig2}.

\begin{figure}[h!]
\begin{center}
%\fbox{\dots}
\includegraphics[width=0.7\textwidth]{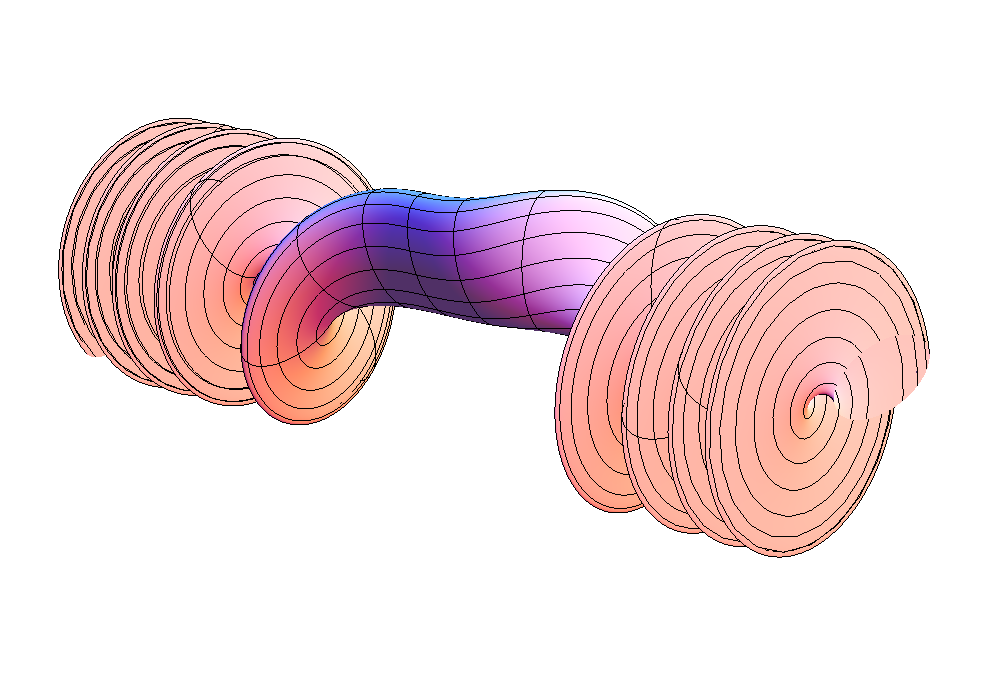}
\end{center}
\caption{Quasi-bounded realisation:
Waveguide of elliptical cross-section 
with a centre of rotation in the exterior of the ellipse.}
\label{Fig2}
\end{figure}
%

%-------------------%
\section{Conclusion}
%-------------------%
%
Condition~\eqref{explode} leads to a new class of waveguides,
whose detailed spectral analysis constitutes an interesting open problem.

For instance, in the quasi-cylindrical setting (Theorem~\ref{Thm.cylindre}),
one could study the existence of discrete spectra or Hardy-type inequalities
and the rate of accumulation of discrete eigenvalues 
to the threshold of the essential spectrum.

In the quasi-bounded setting (Theorem~\ref{Thm.bounded}),
an interesting question is to establish 
some sort of Weyl-type asymptotics for the discrete eigenvalues.
Note that~$\Omega$ has an infinite volume 
regardless of the choice of~$\theta$.
In fact, the volume is ``locally preserved'' for twisted tubes,
\ie\ 
$
  |\mathcal{L}\big((a_1,a_2)\times\omega\big)|
  = |(a_1,a_2)\times\omega|
$
for any $a_1 < a_2$. 
This makes the present model very different from~\cite{Exner-Barseghyan_2013},
where the cross-section shrinks to a point at the infinity of the waveguide. 
Sharp semiclassical estimates of eigenvalues 
in distinct types of domains of infinite volume 
has been established recently in \cite{Geisinger-Weidl_2011}.

%---------------------------%
\subsection*{Acknowledgment}
%---------------------------%
%
I would like to thank Rupert L.~Frank
who raised the question of waveguides with asymptotically diverging twisting
after my talk in the \emph{Isaac Newton Institute for Mathematical Sciences}
in Cambridge, April 2007.
The research was partially supported
by the project RVO61389005 and the GACR grant No.\ 14-06818S.
The author also acknowledges the award from 
the \emph{Neuron fund for support of science},
Czech Republic, May 2014.

%--------------%
% BIBLIOGRAPHY %
%--------------%
%
%\addcontentsline{toc}{section}{References}
{\small
\bibliography{bib}
\bibliographystyle{amsplain}
}

\end{document}